\documentclass[11pt,a4paper]{article}
\pdfoutput=1 
\usepackage{bbm}
\usepackage{epsf,epsfig,amsfonts,amsgen,amsmath,amstext,amsbsy,amsopn,amsthm,cases,listings,color
}
\usepackage{ebezier,eepic}
\usepackage{color}
\usepackage{multirow}
\usepackage{epstopdf}
\usepackage{graphicx}
\usepackage{pgf,tikz}
\usepackage{mathrsfs}
\usepackage[marginal]{footmisc}
\usepackage{enumitem}
\usepackage[titletoc]{appendix}
\usepackage{booktabs}
\usepackage{url}
\usepackage{mathtools}

\usepackage{pgfplots}
\usepackage{authblk}
\usepackage{amssymb}

\usepackage{wasysym}

\usepackage{empheq}

\usepackage{dsfont}

\usepackage{caption}
\usepackage{subcaption}
\pgfplotsset{compat=1.18}
\usepackage{mathrsfs}
\usepackage{wasysym} 
\usetikzlibrary{arrows}
\usepackage{aligned-overset}
\usepackage{bm}

\usepackage[normalem]{ulem}
\usepackage[backref=page]{hyperref}
\usepackage{makecell}

\allowdisplaybreaks[1]

\definecolor{uuuuuu}{rgb}{0.27,0.27,0.27}
\definecolor{sqsqsq}{rgb}{0.1255,0.1255,0.1255}

\setlength{\textwidth}{150mm} \setlength{\oddsidemargin}{7mm}
\setlength{\evensidemargin}{7mm} \setlength{\topmargin}{-5mm}
\setlength{\textheight}{245mm} \topmargin -18mm

\newtheorem{definition}{Definition}
\newtheorem{theorem}[definition]{Theorem}
\newtheorem{lemma}[definition]{Lemma}

\newtheorem{fact}[definition]{Fact}
\theoremstyle{remark}
\newtheorem{remark}[definition]{Remark}

\newcommand{\C}[1]{\mathcal{#1}}
\newcommand{\I}[1]{{\mathbbm #1}}


\newcommand{\hide}[1]{}
\renewcommand{\mid}{:}

\setlength{\parindent}{0pt}
\parskip=8pt
\begin{document}
\title{\bf\Large A note on the minimum size of Tur\'{a}n systems}
\author{Xizhi Liu}
\author{Oleg Pikhurko}
\affil{Mathematics Institute and DIMAP,
             University of Warwick,
             Coventry, 
             UK
}
\date{\today}
\maketitle
\begin{abstract}
For positive integers $n \ge s > r$, a \emph{Tur\'{a}n $(n,s,r)$-system} is an $n$-vertex $r$-graph in which every set of $s$ vertices contains at least one edge.
Let $T(n,s,r)$ denote the the minimum size of a Tur\'{a}n $(n,s,r)$-system. 

Upper bounds on $T(n,s,r)$ were established by Sidorenko~\cite{Sid97} for the case $s-r = \Omega(r/\ln r)$ (based on a construction of Frankl--R\"{o}dl~\cite{FR85}) and by a number of authors in the case $s-r = O(1)$. 
In this note, we establish upper bounds in the remaining range $O(1)<s-r = O(r/\ln r)$.
\end{abstract}
\section{Introduction}
Given an integer $r\ge 2$, an \textbf{$r$-uniform hypergraph} (henceforth an \textbf{$r$-graph}) $\mathcal{H}$ is a collection of $r$-subsets of some set $V$. We call $V$ the \textbf{vertex set} of $\mathcal{H}$ and denote it by $V(\mathcal{H})$. When $V$ is understood, we usually identify a hypergraph $\mathcal{H}$ with its set of edges.

For positive integers $n \ge s > r$, a \textbf{Tur\'{a}n $(n,s,r)$-system} is an $r$-graph $\mathcal{H}$ on an $n$-set $V$ such that every $s$-subset $S\subseteq V$ contains at least one edge from $\mathcal{H}$. 
Denote by $T(n,s,r)$ the smallest \textbf{size} (i.e.\ the number of edges) of a Tur\'{a}n $(n,s,r)$-system. 
Observe that $T(n,s,r) = \binom{n}{r} - \mathrm{ex}(n,K_{s}^{r})$, where $\mathrm{ex}(n,K_{s}^{r})$ denotes the Tur\'{a}n number of the complete $r$-graph on $s$ vertices $K_{s}^{r}$. 
A simple averaging argument shows that $\mathrm{ex}(n,K_{s}^{r})/\binom{n}{r}$ is non-increasing (see e.g.~\cite{KNS64}), and hence the following limit exists: 
\begin{align}\label{eq:density}
    t(s,r)
    \coloneqq \lim_{n\to \infty}\frac{T(n,s,r)}{\binom{n}{r}}. 
\end{align}
Determining the value of $t(s,r)$ is a central topic in Extremal Combinatorics. 
The seminal Tur\'{a}n Theorem~\cite{Tur41} established that $t(s,2) = \frac{1}{s-1}$ for all $s \ge 3$ (with the case $s=3$ solved earlier by Mantel~\cite{Mantel07}). 
However, the exact value of $t(s,r)$ remains unknown for any pair $(s,t)$ satisfying $s > r \ge 3$, despite decades of active attempts. 
Erd\H{o}s~\cite{Erd81} offered \$500 for the determination of $t(s,r)$ for any $s > r \ge 3$. 
Tur\'{a}n and other researchers conjectured that $t(s,3) = \frac{4}{(s-1)^2}$ for $s \ge 4$. 
Various constructions achieving this bound are known (see e.g.~\cite{Sid95}). 
For $r \ge 4$,  there is no general conjectured value for $t(s,r)$, except for the case $(s,r) = (5,4)$ (see~\cite{Gir90,Mar09}). 
For further related results, we refer the reader to surveys such as~\cite{Caen94Survey,Fur91,Sid95,Kee11}. 

In this note, we focus on the case where $r \to \infty$, and all asymptotics are taken with respect to $r$ unless otherwise specified. The trivial lower bound is $t(s,r) \ge {1}/{\binom{s}{r}}$, which follows e.g.\ from the monotonicity of the ratio in~\eqref{eq:density}. For convenience, let us define
\begin{align*}
    \mu(s,r)
    \coloneqq t(s,r) \cdot \binom{s}{r}.
\end{align*}
 Thus $\mu(s,r)$ is always at least 1.
 
The best-known general lower bound, $t(s,r) \ge 1/{\binom{s-1}{r-1}}$ (i.e.\ $\mu(s,r)\ge s/r$) is due to de Caen~\cite{Caen83}. 
In particular, $t(r+1, r) \ge \frac{1}{r}$, a result that was independently proved by de Caen~\cite{Decaen83ac}, Sidorenko~\cite{Sidorenko82}, and Tazawa and Shirakura~\cite{TazawaShirakura83}.
Further improvements on $t(r+1, r)$ in lower order terms were made by Giraud (unpublished), Chung--Lu~\cite{CL99}, and Lu--Zhao~\cite{LZ09}. 

Improving previous upper bounds established by Sidorenko~\cite{Sid81}, Kim--Roush~\cite{KR83}, Frankl--R\"{o}dl~\cite{FR85}, and Sidorenko~\cite{Sid97}, the second author established the following upper bound, which disproved the conjecture of de Caen~\cite{Caen94Survey} that $r \cdot t(r+1, r) \to \infty$. 

\begin{theorem}[\cite{Pik24}]\label{THM:Pikhurko}
    For every integer $R \ge 1$, it holds that 
    \begin{align*}
        \mu(r+R,r)
        \le \alpha + o(1), 
        \quad\text{as}\ r \to \infty, 
    \end{align*}
    where $\alpha \coloneqq {(c_0+1)^{R+1}}/{c_0^{R}}$ with $c_0 = c_0(R)$ being the largest real root of the equation $\mathrm{e}^{x} = (x+1)^{R+1}$. 
    In particular, $
    \mu(r+1,r)\le 4.911$ for all sufficiently large $r$. 
\end{theorem}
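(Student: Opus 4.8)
The plan is to exhibit, for each fixed $R$ and all large $r$, an explicit Turán $(n,r+R,r)$-system with at most $(\alpha+o(1))\binom{n}{r}\big/\binom{r+R}{r}$ edges for every large $n$. Since $T(n,s,r)/\binom{n}{r}$ is non-decreasing in $n$ with limit $t(s,r)$, and $\mu(r+R,r)=t(r+R,r)\binom{r+R}{r}$, such a family proves the theorem. I would build the hypergraph as a blow-up of a finite blueprint: fix an integer $k$ (depending on $R$, possibly on $r$), a probability vector $q=(q_1,\dots,q_k)$ on a colour set $[k]$, and a family $\mathcal E$ of size-$r$ multisets over $[k]$ (the ``edge profiles''). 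Colour the $n$ vertices so that colour $i$ is used on $(q_i+o(1))n$ of them, and put an $r$-set into $\mathcal H$ exactly when its colour-multiset lies in $\mathcal E$. Then $\mathcal H$ is a valid Turán $(n,r+R,r)$-system as long as
\begin{align}\label{eq:cover}
\text{every colour-multiset of size }r+R\text{ contains a size-}r\text{ sub-multiset lying in }\mathcal E,
\end{align}
because any $r+R$ vertices realize such a multiset; and a Stirling estimate gives $|\mathcal H|/\binom{n}{r}\to\Pr\!\big[\mathrm{Mult}(r;q)\in\mathcal E\big]$ as $n\to\infty$. Hence $t(r+R,r)\le\Pr[\mathrm{Mult}(r;q)\in\mathcal E]$, and everything reduces to choosing $(k,q,\mathcal E)$ satisfying \eqref{eq:cover} with $\Pr[\mathrm{Mult}(r;q)\in\mathcal E]\cdot\binom{r+R}{r}\to\alpha$.

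The core is the design of $\mathcal E$: by \eqref{eq:cover} it must be a covering code for the operation ``delete $R$ elements of a multiset'', of minimum probability. A natural attempt is to take $\mathcal E=\{M:\Psi(M)\in B\}$ for a suitable (possibly piecewise-linear) statistic $\Psi$ of the multiset, chosen so that \eqref{eq:cover} becomes transparent: for each size-$(r+R)$ multiset $N$, the $\Psi$-values of its size-$r$ sub-multisets form an explicit interval-like set $\Sigma(N)$, and \eqref{eq:cover} asks only that $B$ meet $\Sigma(N)$ for every $N$. The tension is this. Because one deletes $R$ elements, the hardest $N$ (those with few distinct colours) make $\Sigma(N)$ a short arithmetic progression, so \eqref{eq:cover} forces $B$ to hit every window of $R+1$ consecutive terms of many arithmetic progressions; but wherever $B$ is this dense inside the bulk of the law of $\Psi(\mathrm{Mult}(r;q))$ it already contributes a positive constant to $\Pr[\mathrm{Mult}(r;q)\in\mathcal E]$. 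One therefore chooses the colour set and skews $q$ (Poisson-like, say $q_i\propto\lambda^i/i!$) so that all the progressions dictated by the extremal $N$ fall into the far tail of the law of $\Psi$, where they cost essentially nothing, leaving only a thin ``boundary layer'' near where the bulk sits; optimizing the single remaining parameter — how far the bulk lies beyond that layer — trades the density of $B$ in the layer against the mass of the tail. The resulting stationarity condition should come out to be exactly $\mathrm e^{x}=(x+1)^{R+1}$, with boundary-layer cost $(c_0+1)^{R+1}/c_0^{\,R}$, consistent with the statement; I expect the ``$\mathrm e^{x}$'' to arise as an $r\to\infty$ Poisson-type normalization in the local limit theorem for $\Psi$, and the exponent ``$R+1$'' to be the window length just described.

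The step I expect to be the real obstacle is making this last balancing rigorous: pinning down the extremal configurations for \eqref{eq:cover}, verifying \eqref{eq:cover} for the \emph{optimized} blueprint rather than a wasteful one, and matching the boundary-layer count against the local limit law accurately enough to recover the constant $\alpha$ itself and not merely its order of magnitude — this is precisely where the transcendental equation gets forced, and it is the technical heart of the argument. (An alternative route worth keeping in mind is an iterated blow-up with the trivial system $t(R+1,1)=1$ as base, in which the recursion depth $R+1$ would again explain the exponent; the analytic bookkeeping looks comparable.) Granting the general-$R$ optimization, the final assertion is immediate: for $R=1$ the equation is $\mathrm e^{x}=(x+1)^{2}$, whose largest root is $c_0=2.5129\ldots$, whence $\alpha=(c_0+1)^{2}/c_0=4.910\ldots<4.911$.
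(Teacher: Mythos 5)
Your opening reduction is sound: a vertex colouring with proportions $q$ together with a family $\mathcal E$ of admissible colour multisets does yield a Tur\'an $(n,r+R,r)$-system whenever every size-$(r+R)$ multiset has a size-$r$ sub-multiset in $\mathcal E$, and the density tends to $\Pr[\mathrm{Mult}(r;q)\in\mathcal E]$. But that reduction is the easy, standard part. The entire content of the theorem is the existence of a blueprint $(k,q,\mathcal E)$ with covering property and probability $(\alpha+o(1))/\binom{r+R}{R}$, and this is precisely what you defer (``granting the general-$R$ optimization\dots''). Nothing in the sketch constructs $\Psi$, $B$, or $q$, verifies the covering condition for the optimized choice, or derives the equation $\mathrm{e}^{x}=(x+1)^{R+1}$ from first principles rather than from the known answer. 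There is also a substantive reason to doubt the route: all published constructions in which membership of an $r$-set is decided by a bounded statistic of a vertex colouring (Kim--Roush, Frankl--R\"odl, Sidorenko) lose at least a logarithmic factor against the trivial bound $\mu\ge 1$, and your heuristic does not identify what would let a statistic-based construction beat that barrier down to an absolute constant. So this is a plan with the key step missing, not a proof.

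For comparison, the actual argument of \cite{Pik24} --- whose engine is reproduced in this note as Lemma~\ref{LEMMA:recursion} --- is not of blueprint type. One fixes a linear order on $[n]$, includes each $(k-R)$-set in a random family $S$ independently with probability $c/\binom{k}{R}$, puts into the system every $r$-set whose initial $(k-R)$-segment lies in $S$, and completes every $k$-set not hit by $S$ with an optimal Tur\'an $(\cdot,r-k+R,r-k)$-system on the vertices to its right; this gives $\mu(r+R,r)\le \binom{r+R}{R}\bigl(c/\binom{k}{R}+\mathrm{e}^{-c}\mu(r-k+R,r-k)/\binom{r-k+R}{R}\bigr)$. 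Iterating with $k\approx\frac{c}{c+1}r$ and writing $f$ for $\limsup_r\mu(r+R,r)$ yields $f\le\bigl(\tfrac{c+1}{c}\bigr)^{R}c+(c+1)^{R}\mathrm{e}^{-c}f$, and choosing $c=c_0$ with $\mathrm{e}^{c_0}=(c_0+1)^{R+1}$ makes the contraction factor $1/(c_0+1)$ and the fixed point exactly $(c_0+1)^{R+1}/c_0^{R}=\alpha$. In other words, the transcendental equation comes from optimizing a recursion along a linear order, not from a local limit theorem for a colour statistic. Your final numerical check for $R=1$ ($c_0\approx 2.513$, $\alpha\approx 4.911$) is correct, but the general construction and its analysis are absent.
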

An immediate corollary of Theorem~\ref{THM:Pikhurko} (for derivation see~\cite[Corollary~1.3]{Pik24}) is that, for all sufficiently large $R$,  
\begin{align}\label{equ:upper-bound-Pikhurko}
    \limsup_{r\to \infty}~\mu(r+R,r)
    \le R \ln R + 3R\ln\ln R=(1+o(1)) R\ln R. 
\end{align}
This improves asymptotically the previous bound by Frankl--R\"{o}dl~\cite{FR85} which states that, for any fixed $R \ge 1$, we have
\begin{align*}
    \mu(r+R,r)
    \le (1+o_{R}(1)) R (R+4) \ln r,\quad \mbox{as $r\to\infty$}. 
\end{align*}
For the case $R \ge \frac{r}{\log_{2} r}$, Sidorenko~\cite{Sid97} (by analyzing the construction of Frankl--R\"{o}dl~\cite{FR85} in this regime) proved that
    \begin{align}\label{equ:upper-bound-Sidorenko-a}
        \mu(r+R, r) 
        \le (1+o(1)) R\ln \binom{r+R}{R}. 
    \end{align}

%

Although Tur\'an systems were actively studied, it seems that no general upper bounds on $t(r+R, r)$ have been published in  the intermediate regime $1 \ll R \le r/\log_2 r$. This is the case we address in this note.  In brief, we show that Sidorenko's bound in~\eqref{equ:upper-bound-Sidorenko-a} applies in the whole range as along as $R$ is sufficiently large, while  the asymptotic bound in~\eqref{equ:upper-bound-Pikhurko} can be extended from constant $R$ to any $R=o(\sqrt r)$.

\begin{theorem}\label{THM:main-mu-r-R-general}
    For every $\varepsilon > 0$, there exists $r_0$ such that the following statements hold for all $r, R$ satisfying $R \ge r_0$. 
    \begin{enumerate}[label=(\roman*)]
        \item\label{THM:main-mu-r-R-general-1} It holds that $\mu(r+R, r)
           \le (1+\varepsilon) R \ln \binom{r+R}{R}$.

      \item\label{THM:main-mu-r-R-general-2} Suppose that $R \le \sqrt{18 r \ln r}$. Then 
        \begin{align*}
             \mu(r+R, r)
            \le \mathrm{e}^{18 R^2/r} \cdot (1+\varepsilon) R \ln R. 
        \end{align*}
    \end{enumerate}
\end{theorem}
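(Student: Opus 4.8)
The plan is to prove each of the two bounds by exhibiting, for arbitrarily large $n$, a Tur\'an $(n, r+R, r)$-system with the claimed number of edges. Since $T(n,s,r)/\binom nr = 1 - \mathrm{ex}(n,K_s^r)/\binom nr$ is non-decreasing with limit $t(s,r)$, such a density bound along a sequence $n\to\infty$ yields the corresponding bound on $\mu(r+R,r)$. In both parts the constructions are probabilistic but ``typed'': the $n$ vertices are split into $N=N(r,R)$ balanced colour classes, all vertices of a class receiving identical random data, so that whether a given $s$-set contains an edge depends only on its colour-profile. There are at most $\binom{N+s-1}{s}$ profiles, a bound independent of $n$, which is what lets the union bound (and hence the entire estimate) be uniform in $n$. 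The only genuinely new ingredient relative to~\cite{Sid97,Pik24} is that all of the resulting estimates must now be made uniform in $R$ over the stated ranges.

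For part~\ref{THM:main-mu-r-R-general-1} I would use the Frankl--R\"odl construction in the form analysed by Sidorenko~\cite{Sid97}: for a probability $p$ and an integer $m$, pick $m$ independent random subsets $W_1,\dots,W_m$ of the colour set (each colour chosen with probability $p$) and declare an $r$-set $e$ an edge iff $|e\cap W_i|\le t$ for some $i$, for a suitable threshold $t$. An $s$-set $S$ is \emph{not} covered iff $|S\cap W_i|\ge R+t+1$ for all $i$, an event governed by a large-deviation estimate for $\mathrm{Bin}(s,p)$ (the worst profile being the one concentrated on a single class); summing the failure probabilities over the $\le\binom{N+s-1}{s}$ profiles gives the Tur\'an property once $m$ is large enough, while the expected edge density is controlled by $m$ times a lower-tail estimate for $\mathrm{Bin}(r,p)$. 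One then optimizes $p$, $t$, $m$ and $N$. Sidorenko carried this out for $R\ge r/\log_2 r$ to obtain $\mu(r+R,r)\le(1+o(1))R\ln\binom{r+R}{R}$; the restriction was used only to simplify the tail and counting estimates, and the work is to show that the optimal choice of parameters and the ensuing estimates go through with error terms that are $o(1)$ \emph{uniformly} as $R\to\infty$, for every $R$ regardless of its size relative to $r$.

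For part~\ref{THM:main-mu-r-R-general-2} I would revisit the proof of Theorem~\ref{THM:Pikhurko} in~\cite{Pik24} and re-run it allowing $R=R(r)\to\infty$ instead of keeping $R$ fixed. The construction and the structure of the optimization — whose optimum is governed by the equation $\mathrm{e}^{x}=(x+1)^{R+1}$ and yields the value $\alpha=\mathrm{e}^{c_0}/c_0^{R}=(c_0+1)(1+1/c_0)^{R}$ — are unchanged; what changes is that the error written there as $o(1)$, coming from a Stirling-type expansion of a binomial coefficient such as $\binom{r+R}{R}$ and from the large-deviation bound securing the Tur\'an property, now contributes in the exponent a term of order $R^2/r$. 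Imposing $R\le\sqrt{18r\ln r}$ keeps this correction (and the logarithm of the number of profiles) under control, and collecting it produces the factor $\mathrm{e}^{18R^2/r}$. Combined with the asymptotics $c_0=c_0(R)=(1+o(1))R\ln R$ (so that $\alpha=(1+o(1))R\ln R$), the optimized bound becomes $\mathrm{e}^{18R^2/r}(1+\varepsilon)R\ln R$; when $R=o(\sqrt r)$ the correction factor is $1+o(1)$, so this recovers~\eqref{equ:upper-bound-Pikhurko} uniformly and extends it from constant $R$.

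The main obstacle in both parts is precisely this uniformity. In~\cite{Sid97,Pik24} the relevant Chernoff/large-deviation inequalities, the count $\binom{N+s-1}{s}$ of profiles, and the Stirling expansions of the binomial coefficients were used as one-parameter asymptotics with $R$ fixed or confined to a narrow range; upgrading them to estimates with \emph{explicit} error terms valid simultaneously for all large $R$ (and all $r$) is where the care is needed. A secondary technical point is to pin down which step in Pikhurko's argument produces the $R^2/r$ correction and to verify that $\sqrt{18r\ln r}$ is indeed the threshold balancing that correction against the $\ln\binom{N+s-1}{s}$ term. I expect the bulk of the effort to be this bookkeeping rather than any conceptually new step.
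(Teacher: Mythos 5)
Your proposal is a plan rather than a proof: in both parts the decisive quantitative steps are deferred (``the work is to show that the \dots\ estimates go through uniformly'', ``pin down which step \dots\ produces the $R^2/r$ correction''), and those steps are precisely the content of the theorem. For part~\ref{THM:main-mu-r-R-general-1}, the paper does not attempt to make Sidorenko's analysis uniform in $R$: it invokes \eqref{equ:upper-bound-Sidorenko-a} verbatim when $R\ge r/\ln r$ and, in the complementary range $R\le r/\ln r$, runs the Lov\'asz Local Lemma colouring construction of Frankl--R\"odl~\cite{FR85} on a ground set of size $N\approx r(r-1)\binom{s}{R}/(2R)$ with $\ell\approx\binom{s}{R}/\ln\bigl(\binom{s}{R}^2\binom{N-s}{R}\bigr)$ colours, takes the least frequent colour class, and blows it up; the resulting density $\frac1\ell+\frac{r(r-1)}{2N}$ is then estimated explicitly. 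The construction you describe (random sets $W_i$ with a threshold $|e\cap W_i|\le t$) is a different member of the Frankl--R\"odl family, and you supply no choice of $p,t,m,N$ and no tail or profile-counting estimates, so there is nothing to check against the claimed bound in the regime where $R$ is far below $r/\log_2 r$.

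For part~\ref{THM:main-mu-r-R-general-2} the gap is more serious. You propose to re-run the optimisation of~\cite{Pik24} for growing $R$ and \emph{assert} that the accumulated error terms produce the factor $\mathrm{e}^{18R^2/r}$ and that $\sqrt{18r\ln r}$ is the right threshold; you explicitly defer identifying where an $R^2/r$ term would arise, so as written this is a conjecture about another proof, not an argument. The paper's route is structurally different and self-contained: it uses the non-asymptotic recursion of Lemma~\ref{LEMMA:recursion} (already proved in~\cite{Pik24}), chooses $k=\lceil Rr/(R+\delta)\rceil+R$ with $\delta=\max\{\varepsilon_1,18R^2/r\}$ and $c\approx R\ln(3R/\delta)$, bounds the residual term $\mu(r-k+R,r-k)$ by part~\ref{THM:main-mu-r-R-general-1}, and, when $R<\ln r$, iterates the recursion by backward induction until the remaining uniformity drops below $18R^2/\varepsilon_1$. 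The factor $\mathrm{e}^{18R^2/r}$ then appears transparently from the elementary inequality $(1+\delta/R)^R\le\mathrm{e}^{\delta}$, not from error terms hidden in~\cite{Pik24}. To salvage your plan you would have to actually carry out a uniform re-analysis of the construction in~\cite{Pik24}, which is a considerably harder task than the paper's reduction and is nowhere begun in the proposal.
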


\section{Proofs}\label{SEC:proof}
In this section, we prove Theorem~\ref{THM:main-mu-r-R-general}. We will use the following notation.
For an integer $n\ge 1$ and a set $X$, we denote $[n]:=\{1,\dots,n\}$ and $\binom{X}{n}:=\{Y\subseteq X\mid |Y|=n\}$. 

Let us begin with the proof of Theorem~\ref{THM:main-mu-r-R-general}~\ref{THM:main-mu-r-R-general-1}. 
\begin{proof}[Proof of Theorem~\ref{THM:main-mu-r-R-general}~\ref{THM:main-mu-r-R-general-1}]
    Given $\varepsilon >0$, let $r_0$ be sufficiently large. Take any $r,R$ with $R \ge r_0$. By~\eqref{equ:upper-bound-Sidorenko-a}, we can assume that e.g.\ $R \le \frac{r}{\ln r}$. Let $s \coloneqq r+R$. Define 
    \begin{align*}
        N
        \coloneqq \left\lfloor r(r-1){s\choose R}/(2R)\right\rfloor 
        \quad\text{and}\quad 
        \ell
        \coloneqq \left\lfloor {s\choose R}/\ln\left({s\choose R}^2{N-s\choose R}\right)\right\rfloor.
    \end{align*}    
    In the rest of the proof, we repeat the construction of Frankl and R\"odl~{\cite[Theorem 3]{FR85}}, arguing that our choices of $N$ and $\ell$ give the stated bound.
 
    Consider a random colouring $c : {[N]\choose r} \to [\ell]$. For each $s$-set $A\in {[N]\choose s}$ we have a bad event that some colour is not present in ${A\choose r}$. Its probability $p$ is at most $\ell (1-1/\ell)^{{s\choose r}}\le \ell \mathrm{e}^{-{s\choose R}/\ell}$ (note that $\binom{s}{r} = \binom{s}{s-r} = \binom{s}{R}$). Define the obvious dependency graph $D$ on ${[N]\choose s}$ where $A\sim B$ if $|A\cap B|\ge r$ (including loops). It is regular of degree $\Delta:=\sum_{i=r}^s {s\choose i}{N-s\choose s-i}$. 
 
We want to apply the Lov\'asz Local Lemma~\cite{EL75} (see also e.g.~\cite[Corollary~5.1.2]{AlonSpencer16pm}), so we have to check that $\mathrm{e} p\Delta<1$, for which it suffices to prove 
 \begin{align}\label{eq:LLL}
     \mathrm{e}^{{s\choose R}/\ell} 
     >\mathrm{e} \ell \Delta.
 \end{align}
 First, we estimate $\Delta$. Consider the ratio of two consecutive terms:
 $$
 \frac{{s\choose i+1}{N-s\choose s-i-1}}{{s\choose i}{N-s\choose s-i}} = \frac{(s-i)^2}{(i+1)(N-2s+i+1)}.
 $$
 Since $3\le R\le s/2$ and $N\ge {s\choose R}\ge {s\choose 3}$, this ratio is at most say $1/2$ for every $i\in [r,s]$ and thus we can bound
 \begin{align}\label{eq:Delta}
     \Delta \le 2{s\choose r}{N-s\choose s-r}=2{s\choose R}{N-s\choose R}.
 \end{align}
 Thus and by the choice of $\ell$ we have 
 $$
 \mathrm{e}^{{s\choose r}/\ell}\ge {s\choose R}^2{N-s\choose R} \ge \mathrm{e}\cdot  \frac{{s\choose R}}{2\ln{s\choose R}} \cdot 2{s\choose R}{N-s\choose R}\ge \mathrm{e} \ell \Delta,
 $$
 as desired. (In fact, our definition of  $\ell$ comes from~\eqref{eq:LLL}, given $N$.)
 
 Thus the Lov\'asz Local Lemma applies and there is a colouring $c$ with no bad events, meaning that each colour gives a Tur\'an $(N,s,r)$-system on $[N]$. Let $\C A\subseteq {[N]\choose r}$ be the $r$-graph formed by the least frequent colour. We have
 \begin{align}\label{eq:A}
     |\C A|\le \frac{1}{\ell}{N\choose r}.
 \end{align}
Now let $n:=mN$ with integer $m\to\infty$. Our Tur\'an $(n,s,r)$-system $\C B$ on $[n]$ is made of a blowup of $\C A$ plus all $r$-sets that intersect at least one part in more than one vertex. We have
 $$
 |\C B|\le m^r|\C A|+N{m\choose 2}{mN-2\choose r-2}\le m^r\cdot \frac{1}{\ell}{N\choose r} + \frac{r(r-1)}{2N} {mN\choose r} \le {mN\choose r} f,
 $$
 where $f:=\frac1{\ell}+\frac{r(r-1)}{2N}$. Since $f$ does not depend on $m$, it gives an upper bound on $t(r+R,r)$ and thus $f\cdot {s\choose R}\ge \mu(s,r)$. Note that, rather roughly, we have $N\le {s\choose R}^2$ and $\ln N\le 2R\ln s$. Therefore,
 $$
 \ell\ge  \frac{{s\choose R}}{2\ln{s\choose R}+R\ln N}-1
 $$ 
 can be forced to be arbitrarily large if $r_0$ was sufficiently large.
 Thus the rounding in the definition of $\ell$ gives only a multiplicative term that is arbitrarily close to $1$
 and we have
\begin{align*}  
\binom sR\, f&\le(1+\varepsilon /4)\left(2\ln \binom sR+R\ln N + \frac {r(r-1){s\choose R}}{2N}\right)\\
 &\le(1+\varepsilon /2)\left(2\ln \binom sR + R\left(2\ln r+\ln \binom sR\right) + R\right) \le (1+\varepsilon ) R\ln \binom sR,
\end{align*}
 proving Theorem~\ref{THM:main-mu-r-R-general}~\ref{THM:main-mu-r-R-general-1}
  (In fact, our choice of $N$ was given by the fact that $\ln x+c/x$ with fixed $c>0$ is minimized on $(\mathrm{e},\infty)$ for $x=c$.)    
\end{proof}


Next, we prove Theorem~\ref{THM:main-mu-r-R-general}~\ref{THM:main-mu-r-R-general-2}.
Before doing so, let us present some preliminary results. 

The following lemma is derived from the proof of~{\cite[Lemma~2.3]{Pik24}}.
\begin{lemma}[\cite{Pik24}]\label{LEMMA:recursion}
    For all integers $r, R \ge 1$, $k \in [R, r-1]$, and a real $c \in \left[0, \binom{k}{R}\right]$, it holds that 
    \begin{align}\label{equ:mu-recursion}
        \mu(r+R,r)
        & \le \binom{r+R}{R} \left(\frac{c}{\binom{k}{R}} + \frac{\mu(r-k+R, r-k)}{\mathrm{e}^{c} \cdot \binom{r-k+R}{R}} \right).
    \end{align}
\end{lemma}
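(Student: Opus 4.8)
The plan is to build a Tur\'an $(n,r+R,r)$-system on a large vertex set $[n]$ by combining a ``prefix'' structure on $k$ designated vertices with a recursively-obtained Tur\'an system on the remaining vertices, mirroring the argument of~\cite[Lemma~2.3]{Pik24}. First I would fix a large integer $n$ and a set $K$ of $k$ vertices; think of $K$ as ``special.'' The key observation is that if $S$ is any $(r+R)$-subset of $[n]$, then $|S\setminus K|\ge r+R-k=(r-k)+R$, so $S\setminus K$ is itself a candidate $(r-k+R)$-set to be covered by a Tur\'an $(\,\cdot\,,(r-k)+R,r-k)$-system placed on $[n]\setminus K$; and if $S\setminus K$ happens to be smaller than that (i.e.\ $S$ contains many vertices of $K$), we must cover $S$ by edges meeting $K$.

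The heart of the construction is a random choice of which $r$-sets to include. I would take each $r$-set $e$ with $e\cap K\neq\emptyset$ and $|e\setminus K|\ge R$ independently, and include it with a probability tuned so that a given $(r+R)$-set $S$ with $|S\cap K|\ge k-R+1$ (so that its ``$K$-free part'' has fewer than $(r-k)+R$ vertices and is not handled by the recursion) is covered with high probability. Concretely, for such an $S$ the number of available $r$-subsets one can use is governed by $\binom{|S\cap K|}{R}\ge\binom{k-R+1}{R}$-type quantities; one includes edges so that the expected number of edges of $\mathcal H$ inside $S$ is about $c$, and the probability that $S$ is \emph{not} covered is then at most $\mathrm{e}^{-c}$ by a standard second-moment or Janson-type/union argument. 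After this random step one adds, on the remaining $r-k$ ``coordinates'', a blow-up of a near-optimal Tur\'an $(r-k+R,r-k)$-system (which by definition of $\mu$ has relative density $(1+o(1))\,\mu(r-k+R,r-k)/\binom{r-k+R}{R}$), taking care of all $(r+R)$-sets $S$ with $|S\cap K|$ small. Deleting one vertex from each still-uncovered $(r+R)$-set (there are few of them, an $o(1)$ fraction) and re-normalizing yields a genuine Tur\'an system; letting $n\to\infty$ converts edge counts into the density $t(r+R,r)$, and multiplying by $\binom{r+R}{R}$ gives the two terms $\dfrac{c}{\binom{k}{R}}$ (cost of the random ``$K$-edges'') and $\dfrac{\mu(r-k+R,r-k)}{\mathrm{e}^{c}\binom{r-k+R}{R}}$ (cost of the recursive blow-up, discounted by the $\mathrm{e}^{-c}$ survival probability), which is exactly~\eqref{equ:mu-recursion}.

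The step I expect to be the main obstacle is the precise probabilistic bookkeeping that produces the clean factor $\mathrm{e}^{-c}$: one must choose the inclusion probability for $K$-meeting $r$-sets so that simultaneously (a) the expected density of these edges contributes at most $\dfrac{c}{\binom{k}{R}}\cdot\binom{r+R}{R}^{-1}\binom{n}{r}$ edges, and (b) every ``dangerous'' $(r+R)$-set is missed with probability at most $\mathrm{e}^{-c+o(1)}$, uniformly, so that the total number of uncovered sets is a negligible fraction of $\binom{n}{r+R}$ and can be fixed by vertex deletion without affecting the asymptotics. Handling the range $c\in[0,\binom{k}{R}]$ (so that the inclusion probability stays in $[0,1]$) and verifying the constraint $k\in[R,r-1]$ keeps all the binomial coefficients well-defined are the places where the hypotheses of the lemma get used. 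Everything else — the blow-up argument, the deletion cleanup, and the passage $n\to\infty$ — is routine and already packaged in~\cite{Pik24}, so I would cite that proof for those parts and concentrate the write-up on verifying that the stated inequality is what the construction delivers.
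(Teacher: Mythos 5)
Your high-level intuition --- a random ``prefix'' layer whose miss probability $\mathrm{e}^{-c}$ discounts a recursively obtained Tur\'an system --- is the right spirit, but the construction you describe does not deliver the inequality. The first problem is the fixed $k$-set $K$. As $n\to\infty$ almost every $(r+R)$-set is disjoint from $K$, so the case $|S\cap K|\ge k-R+1$ that your random edges are supposed to handle almost never occurs, and the $r$-sets meeting $K$ are an $o(1)$ fraction of $\binom{n}{r}$, so they cannot account for the term $c/\binom{k}{R}$. The paper instead fixes a linear order on $[n]$ and works with initial segments: a random family $S\subseteq\binom{[n]}{k-R}$ is chosen with each member present independently with probability $p=c/\binom{k}{R}$, and $S^*$ consists of all $r$-sets whose smallest $k-R$ elements form a member of $S$; this has density exactly $p$ in expectation, giving the first term. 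Then, for every $k$-set $Y$ \emph{not} hit by $S$ (probability $(1-p)^{\binom{k}{R}}\le\mathrm{e}^{-c}$, since $Y$ has $\binom{k}{k-R}=\binom{k}{R}$ subsets of size $k-R$), one attaches a minimum Tur\'an $(\cdot\,,r-k+R,r-k)$-system on the vertices to the right of $\max Y$; that conditioning is where the factor $\mathrm{e}^{-c}$ multiplying $\mu(r-k+R,r-k)$ comes from. In your scheme the recursive blow-up covers all $(r+R)$-sets with $|S\cap K|$ small unconditionally, so there is no justification for discounting its cost by $\mathrm{e}^{-c}$.

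The second problem is your cleanup step. Each ``dangerous'' $(r+R)$-set survives uncovered with probability about $\mathrm{e}^{-c}$, which is a constant (recall $c$ is a fixed real, possibly $0$), so the uncovered sets form a constant fraction of $\binom{n}{r+R}$, not an $o(1)$ fraction; deleting a vertex from each would destroy the host set. The paper needs no cleanup at all: for every outcome of the randomness, each $(r+R)$-set $A=\{a_1<\dots<a_{r+R}\}$ is covered deterministically --- either its initial $k$-segment $Y=\{a_1,\dots,a_k\}$ contains some $W\in S$, in which case $W\cup\{a_{k+1},\dots,a_{r+R}\}$ is an $r$-set of $S^*$ inside $A$, or $Y$ is unhit and the Tur\'an system attached to $Y$ finds an $(r-k)$-edge $Z$ inside $\{a_{k+1},\dots,a_{r+R}\}$ with $Y\cup Z\subseteq A$. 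The randomness is used only to select an outcome whose edge count is at most its expectation. To make your argument work you would essentially have to replace your construction by this one.
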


\begin{proof}[Sketch of Proof] Let $S$ be a random subset of $\binom{[n]}{k-R}$ where each $(k-R)$-set is included into $S$ with probability $p:=c/\binom{k}{R}$. Let $S^*\subseteq \binom{[n]}{r}$ consist of those $r$-sets $\{x_1<\dots<x_r\}$ such that $\{x_1,\dots,x_{k-R}\}\in S$, that is, we include an $r$-set into $S^*$ if its initial $(k-R)$-segment is in $S$. Let $T\subseteq \binom{[n]}{k}$ consist of those $k$-sets $X$ such that $\binom{X}{k-R}\cap S=\emptyset$, that is, $X$ is not hit by any set in $S$. For each $y\in [n]$, take
a minimum Tur\'an $(n-y,r-k+R,r-k)$-system $F_y$ on $\{y+1,\dots,n\}$. Let $T^*\subseteq \binom{[n]}r$ be be union over $Y\in T$ of the $r$-graphs
$\{Y\cup Z\mid Z\in F_{\max Y}\}$. Informally speaking, we extend every $Y\in T$ by a minimum Tur\'an system to the right of $Y$. 

It is easy to check that $G:=S^*\cup T^*$ is  a Tur\'an $(n,r+R,r)$-system, regardless of the choice of $S$. By taking $S$ such that $|G|$ is at most its expected value, it is routine to see that
 \begin{eqnarray*}
 T(n,r+R,r)&\le& \I E|S^*|+\I E|T^*|
\\&=& p {n\choose r}
+\sum_{y=k}^{n}  
\left(1-p
\right)^{{k\choose R}}
{y-1\choose k-1} \cdot T(n-y,r-k+R,r-k)
\\&\le& \left(\frac{c}{{k\choose R}}+  \frac{\mathrm{e}^{-c}}{{r-k+R\choose R}}\mu(r-k+R,r-k)\right){n\choose r},
 \end{eqnarray*}
 giving the required.
    \end{proof}

\begin{fact}\label{FACT:inequalities}
    For any integers $r_1\ge r_2> R$, we have 
    \begin{align*}
        \binom{r_1}{R}/\binom{r_2}{R}
        & = \prod_{i=0}^{R-1}\frac{r_1-i}{r_2-i}
        \le \left(\frac{r_1-R}{r_2-R}\right)^{R} . 
    \end{align*}
\end{fact}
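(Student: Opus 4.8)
\textbf{Proof proposal for Fact~\ref{FACT:inequalities}.}

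The plan is to prove the stated identity and inequality directly from the definition of the binomial coefficient as a product. First I would write
\[
\binom{r_1}{R}\Big/\binom{r_2}{R}
= \frac{r_1!/((r_1-R)!\,R!)}{r_2!/((r_2-R)!\,R!)}
= \frac{r_1!\,(r_2-R)!}{r_2!\,(r_1-R)!}
= \prod_{i=0}^{R-1}\frac{r_1-i}{r_2-i},
\]
where the last step telescopes: both numerator and denominator are products of $R$ consecutive integers, namely $r_1(r_1-1)\cdots(r_1-R+1)$ and $r_2(r_2-1)\cdots(r_2-R+1)$, so pairing the $i$-th factors gives the claimed product. This establishes the equality in the statement.

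For the inequality, I would show that each factor $\frac{r_1-i}{r_2-i}$ is at most $\frac{r_1-R}{r_2-R}$ for every $i \in \{0,1,\dots,R-1\}$, since then the product of $R$ such factors is bounded by $\left(\frac{r_1-R}{r_2-R}\right)^R$. The key observation is that the function $i \mapsto \frac{r_1-i}{r_2-i}$ is nondecreasing in $i$ on the relevant range. Indeed, writing $\frac{r_1-i}{r_2-i} = 1 + \frac{r_1-r_2}{r_2-i}$ and using $r_1 \ge r_2$ (so $r_1 - r_2 \ge 0$) together with $r_2 - i > 0$ for $i \le R-1 < r_2$ (which holds because $r_2 > R$), we see that as $i$ increases the denominator $r_2 - i$ decreases while staying positive, hence $\frac{r_1-r_2}{r_2-i}$ increases (weakly), so the factor increases (weakly) in $i$. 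Therefore the largest factor is the one at $i = R-1$, but it is even cleaner to note that $i=R-1 < R$ gives $\frac{r_1-(R-1)}{r_2-(R-1)} \le \frac{r_1-R}{r_2-R}$ is false in general — rather, the correct bound is that each factor with $i \le R-1$ satisfies $\frac{r_1-i}{r_2-i} \le \frac{r_1-R}{r_2-R}$ precisely because the map is nondecreasing and $R-1 < R$; so I would instead compare directly: for $i \le R-1$ we have $r_2 - i \ge r_2 - R + 1 > r_2 - R$, and since numerator difference $r_1 - r_2 \ge 0$,
\[
\frac{r_1-i}{r_2-i} = 1 + \frac{r_1-r_2}{r_2-i} \le 1 + \frac{r_1-r_2}{r_2-R} = \frac{r_1-R}{r_2-R}.
\]
Multiplying these $R$ inequalities together yields $\binom{r_1}{R}/\binom{r_2}{R} \le \left(\frac{r_1-R}{r_2-R}\right)^R$.

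I do not anticipate a genuine obstacle here: the only point requiring a little care is verifying that all denominators $r_2 - i$ remain strictly positive, which is exactly what the hypothesis $r_2 > R$ (hence $r_2 > R-1 \ge i$) guarantees, and that $r_1 - r_2 \ge 0$ so the error term has the right sign. Everything else is elementary manipulation of finite products. If one wanted an even slicker argument, one could observe that $\log\binom{x}{R}$ as a function of real $x$ has nonincreasing increments, but the term-by-term comparison above is self-contained and keeps the note elementary.
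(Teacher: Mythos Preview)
Your argument is correct: the equality is the standard telescoping of falling factorials, and the termwise bound $\frac{r_1-i}{r_2-i}=1+\frac{r_1-r_2}{r_2-i}\le 1+\frac{r_1-r_2}{r_2-R}=\frac{r_1-R}{r_2-R}$ for $0\le i\le R-1$ (using $r_1\ge r_2$ and $r_2-i\ge r_2-R+1>r_2-R>0$) gives the inequality. The paper states this as a Fact with no proof, so there is nothing to compare; your write-up would benefit only from deleting the digression where you momentarily consider and reject comparing the factor at $i=R-1$ with that at $i=R$, since the clean inequality you write next already settles it.
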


\hide{
\begin{fact}\label{FACT:inequalities}
    Let $r > k \ge 1$ and $R \ge 1$ be integers. 
    Then
    \begin{align*}
        \binom{r+R}{R}/\binom{k}{R}
        & = \prod_{i=0}^{R-1}\frac{r+R-i}{k-i}
        \le \left(\frac{r}{k-R}\right)^{R} \quad\text{ if $r+R\ge k$,\quad and}\\
        \binom{r+R}{R}/\binom{r-k+R}{R} 
        & = \prod_{i=0}^{R-1}\frac{r+R-i}{r-k+R-i}
        \le \left(\frac{r}{r-k}\right)^{R}. 
    \end{align*}
\end{fact}
}
\begin{fact}\label{FACT:inequalities-b}
    Let $r \ge 1, R \ge 1$ be integers and $\delta$ be a real number satisfying $18 R^2/r \le \delta \le R$.
    Let $k \coloneqq \left\lceil \frac{R r}{R+\delta} \right\rceil + R$. 
    Then 
    \begin{align*}
        k \le r-1, \quad 
        \frac{r}{k-R} \le 1 + \frac{\delta}{R}, \quad\text{and}\quad 
        \frac{r}{r-k} 
        \le \frac{3R}{\delta}. 
    \end{align*}
\end{fact}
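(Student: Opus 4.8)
The plan is to reduce all three claims to elementary estimates on the real number $x \coloneqq \frac{Rr}{R+\delta}$, exploiting the two-sided bound $x \le k-R = \lceil x \rceil < x+1$ that is immediate from the definition of $k$. It will be convenient to record the identity $x = r - \frac{\delta r}{R+\delta}$, so that $k = \lceil x\rceil + R$ satisfies $r-k > \frac{\delta r}{R+\delta} - 1 - R$. Two consequences of the hypotheses will be used repeatedly: since $\delta \le R$ we have $R+\delta \le 2R$, hence $\frac{\delta r}{R+\delta} \ge \frac{\delta r}{2R}$; and since $\delta \ge 18R^2/r$ we get $\frac{\delta r}{2R} \ge 9R$. (Note these hypotheses are consistent only when $r \ge 18R$, which by itself already leaves plenty of room.)

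For the middle inequality I would simply use $k-R = \lceil x\rceil \ge x = \frac{Rr}{R+\delta}$, which yields $\frac{r}{k-R} \le \frac{R+\delta}{R} = 1 + \frac{\delta}{R}$ with nothing further to verify; this is the only one of the three that does not need the lower bound on $\delta$.

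For $k \le r-1$, I would combine $r-k > \frac{\delta r}{R+\delta} - 1 - R$ with $\frac{\delta r}{R+\delta} \ge 9R$ to get $r-k > 8R - 1 \ge 7$; since $r-k$ is an integer this forces $r-k \ge 8$, and in particular $k \le r-1$. For the last inequality I would push the same estimate slightly further: from $r-k > \frac{\delta r}{2R} - 1 - R$ it suffices to show $\frac{\delta r}{2R} - 1 - R \ge \frac{\delta r}{3R}$, equivalently $\frac{\delta r}{6R} \ge 1 + R$, and this holds because $\delta r \ge 18R^2 \ge 6R^2 + 6R$. Hence $r-k > \frac{\delta r}{3R}$, so $\frac{r}{r-k} \le \frac{3R}{\delta}$.

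I do not expect a genuine obstacle here: the only point requiring care is that the additive slack — the $+1$ coming from the ceiling and the $+R$ coming from the $+R$ term in the definition of $k$ — be dominated by the main term $\frac{\delta r}{R+\delta}$, which is of order at least $9R$. This is precisely what the numerical constant $18$ in the hypothesis $\delta \ge 18R^2/r$ buys: a smaller constant could break the bookkeeping in parts (i) and (iii) for small $R$, whereas $18$ keeps a comfortable margin throughout.
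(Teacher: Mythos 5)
Your proof is correct and follows essentially the same route as the paper's: bound $k-R=\lceil Rr/(R+\delta)\rceil$ from both sides and absorb the additive slack $1+R$ using $\delta r\ge 18R^2$ and $\delta\le R$. The only cosmetic difference is in the third inequality, where the paper absorbs the slack by replacing $\delta$ with $\delta/2$ in the denominator (getting $(2R+\delta)/\delta\le 3R/\delta$) while you replace $2R$ by $3R$; both verifications are sound.
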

\begin{proof}[Proof of Fact~\ref{FACT:inequalities-b}]
    Since $\delta \ge \frac{18 R^2}{r}$, straightforward calculations show that 
    \begin{align*}
        \frac{\delta r}{R+\delta} - R - 2
        & = \frac{\delta r - (R+\delta)(R+2)}{R+\delta}
        \ge \frac{18 R^2 - (R+R)(R+2)}{R+\delta}
        \ge 0 \quad\text{and}\quad\\
        \frac{R r}{R + \delta/2} - \frac{R r}{R + \delta}
        & = \frac{\delta  r R}{(R + \delta) (2 R + \delta)} 
        \ge \frac{18 R^3}{(R+R)(2R+R)}
        \ge R+1. 
    \end{align*}
    It follows that 
    \begin{align*}
        k 
        & \le \frac{R r}{R+\delta} + 1 + R
        = r - 1 - \left(\frac{\delta r}{R+\delta} - R - 2\right)
        \le r-1, \\
        \frac{r}{k-R}
        & \le \frac{r}{\frac{R r}{R + \delta} + R -R}
        = \frac{R+\delta}{R}
        = 1 + \frac{\delta}{R},  \quad\text{and}\\
        \frac{r}{r-k}
        & \le \frac{r}{r- \frac{R r}{R + \delta} -1 - R}
        \le \frac{r}{r- \frac{R r}{R + \delta/2}}
        = \frac{2R+\delta}{\delta}
        \le \frac{3R}{\delta}, 
    \end{align*}
    which proves Fact~\ref{FACT:inequalities-b}. 
\end{proof}

We are now ready to present the proof of Theorem~\ref{THM:main-mu-r-R-general}~\ref{THM:main-mu-r-R-general-2}. 
\begin{proof}[Proof of Theorem~\ref{THM:main-mu-r-R-general}~\ref{THM:main-mu-r-R-general-2}]
    Given $\varepsilon > 0$, choose a sufficiently small real $\varepsilon_1 > 0$ and then a sufficiently large integer~$r_0$. 
    Take any integers $r, R$ such that $R \ge r_0$ and $R \le \sqrt{18 r \ln r}$.\medskip 
    
\noindent\textbf{Case 1.}  Suppose that $R \ge \ln r$. 

We define
    \begin{align*}
        \delta
        \coloneqq \max\left\{\varepsilon_1,~\frac{18 R^2}{r}\right\}, \quad 
        k 
        \coloneqq \left\lceil \frac{R r}{R+\delta} \right\rceil + R, 
        \quad\text{and}\quad 
        c \coloneqq R \ln\left(\frac{3R}{\delta}\right) + \ln(2R^3). 
    \end{align*}

Clearly, $c\le \binom{k}{R}$.   Since $R$ is large and $\ln r \le R$, it follows from
    Theorem~\ref{THM:main-mu-r-R-general}~\ref{THM:main-mu-r-R-general-1} that, for example,
    \begin{align*}
        \mu(r-k+R, r-k)
        & \le 2\, R \ln \binom{r-k+R}{R} 
         \le 2\, R^2 \ln r
        \le 2R^3. 
    \end{align*}
    Combining this with Lemma~\ref{LEMMA:recursion}, Facts~\ref{FACT:inequalities} and~\ref{FACT:inequalities-b}, we obtain 
    \begin{align*}
        \mu(r+R,r)
        & \le \binom{r+R}{R} \left(\frac{c}{\binom{k}{R}} + \frac{2R^3}{\mathrm{e}^{c} \cdot \binom{r-k+R}{R}}\right) \\
        & \le \left(\frac{r}{k-R}\right)^{R} \cdot c + \left(\frac{r}{r-k}\right)^{R} \cdot \frac{2R^3}{\mathrm{e}^c} \\
        & \le \left(1+\frac{\delta}{R}\right)^{R} \cdot c + \left(\frac{3R}{\delta}\right)^{R} \cdot \frac{2R^3}{\mathrm{e}^c} 
        \le \mathrm{e}^{\delta} \cdot c + \mathrm{e}^{R \ln \left(\frac{3R}{\delta}\right) - c} \cdot 2R^3
        \le \mathrm{e}^{\delta} \cdot c + 1. 
    \end{align*}
    If $\frac{18 R^2}{r}\le \varepsilon_1$, that is, $R \le \sqrt{\varepsilon_1 r/18}$, then 
    \begin{align*}
        \mu(r+R,r)
        &\le \mathrm{e}^{\varepsilon_1} \cdot c + 1 
         \le (1+ 2\varepsilon_1) \left(R \ln\left(\frac{3R}{\varepsilon_1}\right) + \ln(2R^3)\right) + 1 \\
        & \le (1+ 2\varepsilon_1) \left(R \left(\ln R + \ln\left(\frac{3}{\varepsilon_1}\right)\right) + 3\ln R + \ln 2\right) + 1 \\
        & \le (1+ \varepsilon) R \ln R, 
    \end{align*}
    as desired. 

    If $\frac{18 R^2}{r}>\varepsilon_1$, that is, $r \le \frac{18 R^2}{\varepsilon_1}$, then 
    \begin{align*}
        \mu(r+R,r)
        &\le \mathrm{e}^{18R^2/r} \cdot c + 1 
         \le \mathrm{e}^{18R^2/r} \left(R \ln\left(\frac{r}{6R}\right) + \ln(2R^3)\right) + 1 \\
        & \le \mathrm{e}^{18R^2/r} \left(R \ln\left(\frac{3R}{\varepsilon_1}\right) + \ln(2R^3)\right) + 1 \\
        & \le \mathrm{e}^{18R^2/r} (1+\varepsilon) R \ln R,
    \end{align*}
    also as desired.\medskip

\noindent\textbf{Case 2.}  Suppose that $R < \ln r$. 

Let $r_1 \coloneqq r$ and, inductively for $i=1,2,\dots$,  define 
   \begin{align*}
        k_i \coloneqq \left\lceil \frac{R r_i}{R+\varepsilon_1} \right\rceil + R, 
        \quad\text{and}\quad  
        r_{i+1} \coloneqq r_i - k_i;
    \end{align*}
    if $r_{i+1}< 18R^2/\varepsilon_1$ then
 let $t:=i$ and stop. Since $r_i$ decreases each time, this process terminates.
  
    We prove by backward induction on $i\in [t]$ that  
    \begin{align}\label{equ:mu-ri-induction}
        \mu(r_i+R, r_i) \le (1+\varepsilon) R\ln R. 
    \end{align}

First, consider the base case $i = t$.   
    %
    Note that, for $i\le t$, we have  by $r_i\ge 18R^2/\varepsilon_1$ that
    \begin{align*}
        r_{i+1}
        = r_i - k_i 
        \ge r_i - \frac{R r_i}{R+\varepsilon_1} - 1 - R
        \ge \frac{\varepsilon_1 r_i}{R+\varepsilon_1} -(R+1)
        \ge \frac{\varepsilon_1 r_i}{2(R+\varepsilon_1)}.
    \end{align*}
    In particular this holds for $i=t$, giving that $r_t\le \frac{18 R^2}{\varepsilon_1}\cdot \frac{\varepsilon_1}{2(R+\varepsilon_1)}$, which is rather roughly at most $e^R$. Thus $R\ge \ln r_t$ and the desired conclusion follows by Case 1.
    
    Now consider the inductive step for some $i\in [t-1]$.
    Let 
    \begin{align*}
        c 
        \coloneqq R \ln\left(3R/\varepsilon_1\right) + \ln(2R \ln R)
        \le \binom{k_i}{R}. 
    \end{align*}
    It follows from  Lemma~\ref{LEMMA:recursion}, Facts~\ref{FACT:inequalities} and~\ref{FACT:inequalities-b} (note that $\varepsilon_1 \ge 18 R^2/r_t \ge 18 R^2/r_i$), and the inductive hypothesis that 
    \begin{align*}
        \mu(r_{i}+R, r_{i})
        & \le \binom{r_{i}+R}{R} \left(\frac{c}{\binom{k_{i}}{R}} + \frac{\mu(r_{i+1}+R, r_{i+1})}{\mathrm{e}^{c} \cdot \binom{r_{i+1}+R}{R}} \right) \\
        & \le \left(\frac{r_i}{k_i-R}\right)^{R} \cdot c + \left(\frac{r_i}{r_{i+1}}\right)^{R} \cdot \frac{(1+\varepsilon) R \ln R}{\mathrm{e}^{c}} \\
        & \le \left(1+\frac{\varepsilon_1}{R}\right)^{R} \cdot c + \left(\frac{3R}{\varepsilon_1}\right)^{R} \cdot \frac{(1+\varepsilon) R \ln R}{\mathrm{e}^c} \\
        & \le \mathrm{e}^{\varepsilon_1} \cdot c + \mathrm{e}^{R \ln\left(\frac{3R}{\varepsilon_1}\right) - c} \cdot 2 R \ln R \\
        & \le (1+2\varepsilon_1) \left(R \ln\left(\frac{3R}{\varepsilon_1}\right) + \ln(2R \ln R)\right) + 1
        \le (1+\varepsilon) R \ln R, 
    \end{align*}
    as desired. 
    
    This completes the proof of Theorem~\ref{THM:main-mu-r-R-general}. 
\end{proof}

\begin{remark}
We did not  optimise the bound in Theorem~\ref{THM:main-mu-r-R-general}~\ref{THM:main-mu-r-R-general-2} when $R=\Omega(\sqrt r)$, since our main aim was to extend the 
inequality  $\mu(r+R,r)\le (1+o(1)) R\ln R$ for constant $R$ from~\cite{Pik24} 
to as large as possible range of functions~$R(r)$.
\end{remark}
\section*{Acknowledgements}
The authors were supported by ERC Advanced Grant 101020255.
\bibliographystyle{alpha}
\bibliography{TuranSystem}
\end{document}